\newtheorem{theorem}{Theorem}
\newtheorem{lemma}{Lemma}
\begin{document}

\title[Isometric embeddings of dual polar graphs in Grassmann graphs]
{Isometric embeddings of dual polar graphs in Grassmann graphs over finite fields}
\author{Mark Pankov}
\subjclass[2000]{51A50, 51E24}
\keywords{Grassmann graph, dual polar graph, isometric embedding}
\address{Department of Mathematics and Computer Science, 
University of Warmia and Mazury,
S{\l}oneczna 54, Olsztyn, Poland}
\email{pankov@matman.uwm.edu.pl}

\maketitle

\begin{abstract}
We consider the Grassmann graphs and dual polar graphs over
the same finite field and show that, up to graph automorphism, for every dual polar graph 
there is the unique isometric embedding in the corresponding Grassmann graph.
\end{abstract}

\section{Introduction}
Grassmann graphs and polar Grassmann graphs (not necessarily over finite fields) 
are interesting for many reasons 
\cite{BC-book,D-book,Pankov-book1,Pankov-book2,Pasini-book, Shult-book}.
For example, they are closely related to buildings of classical types \cite{Tits}.
Also, Grassmann graphs and dual polar graphs (over finite fields)
are classical examples of distance regular graphs \cite{BCN-book}.
Embeddings (not necessarily isomorphic in some cases) 
of Grassmann graphs and polar Grassmann graphs 
(over division rings) are investigated in 
\cite{Pankov-paper1,Pankov-book2,KP,Pankov-paper2}.

All dual polar graphs defined by sesquilinear, quadratic and pseudo-quadratic forms
are naturally isometrically embedded in the corresponding Grassmann graphs.
In this short note we consider the Grassmann graphs and dual polar graphs over
the {\it same finite} field. 
We show that, up to graph automorphism, for every dual polar graph 
there is the unique isometric embedding in the corresponding Grassmann graph. 
This statement is related to the problem formulated in \cite{KMP}.

The author's interest is the general case when 
Grassmann graphs and dual polar graphs are 
related to different not necessarily finite division rings. 
To describe isometric embeddings and  get a result in spirit of \cite[Chapter 3]{Pankov-book2},
we need semilinear embeddings of special type. 
This is a topic for a more detailed research.

\section{Result}
Let $V$ be an $n$-dimensional  vector space over a division ring. 
Denote by ${\mathcal G}_{k}(V)$ the Grassmannian formed by 
$k$-dimensional subspaces of $V$.
The corresponding {\it Grassmann graph} $\Gamma_{k}(V)$ is 
the graph whose vertex set is ${\mathcal G}_{k}(V)$
and two $k$-dimensional subspaces are adjacent vertices of this graph if their intersection is $(k-1)$-dimensional.
If $k=1,n-1$ then any two distinct vertices of $\Gamma_{k}(V)$ are adjacent and we will alway suppose that
$1<k<n-1$. Also, we can suppose that $2k\le n$, since the Grassmann graphs $\Gamma_{k}(V)$ and $\Gamma_{n-k}(V^{*})$ are 
isomorphic.

We write $\Pi_{V}$ for the projective space associated to $V$ 
(the points are the $1$-dimensional subspaces and the lines are defined by the $2$-dimensional subspaces).
Let $\Pi=({\mathcal P},{\mathcal L})$ be a rank $m$ polar space,
see \cite{BC-book, Ueberberg} for the precise definition.
Also, we suppose that the polar space $\Pi$ is embedded in 
the projective space $\Pi_{V}$, i.e. 
the lines of $\Pi$ are lines of $\Pi_{V}$. 
Then $2m\le n$ and all maximal singular subspaces of $\Pi$ can be identified  with some $m$-dimensional subspaces of $V$.
The set of all such $m$-dimensional subspaces is denoted by ${\mathcal G}(\Pi)$.
The corresponding {\it dual polar graph} $\Gamma(\Pi)$ 
is the restriction of the Grassmann graph $\Gamma_{m}(V)$
to the set of maximal singular subspaces ${\mathcal G}(\Pi)$.

The existence of isometric embeddings of $\Gamma(\Pi)$ in $\Gamma_{k}(V)$ 
implies that $m\le k$, i.e.
the diameter of the dual polar graph is not greater than the diameter of the Grassmann graph.

\begin{theorem}
If $m\le k$ and $V$ is a vector space over a finite field then, 
up to automorphism of the Grassmann graph $\Gamma_{k}(V)$, 
there is the unique isometric embedding of $\Gamma(\Pi)$ in $\Gamma_{k}(V)$.
\end{theorem}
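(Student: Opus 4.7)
\emph{Proof plan.}
The argument splits naturally into existence and uniqueness. For existence, I plan to construct the natural isometric embedding by choosing a $(k-m)$-dimensional subspace $W \subset V$ in general position with respect to $\Pi$: $W \cap (U_1 + U_2) = 0$ for every pair $U_1, U_2 \in \mathcal{G}(\Pi)$. Since $(k-m) + 2m = k + m \le 2k \le n$, such a $W$ exists; over a finite field only finitely many ``bad'' subspaces have to be avoided. A modular-law computation then yields $\dim\bigl(f_W(U_1) \cap f_W(U_2)\bigr) = (k-m) + \dim(U_1 \cap U_2)$ for $f_W(U) := U + W$, matching the dual polar distance, so $f_W$ is isometric.

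For uniqueness I would begin with an arbitrary isometric embedding $f : \Gamma(\Pi) \to \Gamma_k(V)$. Distance preservation forces $\dim(f(U_1) \cap f(U_2)) = (k-m) + \dim(U_1 \cap U_2)$, so opposite pairs give $\dim W_f \le k - m$ where $W_f := \bigcap_{U \in \mathcal{G}(\Pi)} f(U)$. The plan for the reverse inequality is to analyse the maximal cliques. Each maximal clique $\mathcal{C}_S = \{U \in \mathcal{G}(\Pi) : S \subset U\}$, indexed by a sub-maximal singular subspace $S$, has image in a unique maximal clique of $\Gamma_k(V)$: either a star (with $(k-1)$-dimensional center) or a top (inside a $(k+1)$-dimensional subspace). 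I plan to show that this image is always a star, by combining $|\mathcal{C}_S| \ge 3$ with the incidence pattern of overlapping cliques at a common vertex (a top-type image produces intersection counts inconsistent with the structure of $\Gamma(\Pi)$). The resulting star centers $\sigma(S)$ then fit together, through the line-and-point incidences of $\Pi$, into a $(k-m)$-dimensional subspace inside every $f(U)$, forcing $\dim W_f = k - m$.

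With $\dim W_f = k - m$ established, the quotient would give an isometric embedding $\bar f : U \mapsto f(U)/W_f$ of $\Gamma(\Pi)$ into $\Gamma_m(V/W_f)$ with $\dim \bar f(U) = m$ and preserving pairwise intersection dimensions. Its image realises $\mathcal{G}(\Pi)$ as the set of maximal singular subspaces of a polar space in $\Pi_{V/W_f}$ isomorphic to $\Pi$. The rigidity results for polar-Grassmann embeddings from \cite[Chapter~3]{Pankov-book2} would then extend $\bar f$ to a semilinear isomorphism, and together with a choice of complement to $W_f$ this lifts to an element of $\mathrm{Aut}(\Gamma_k(V))$ conjugating $f$ into the standard form $f_W$. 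Different standard choices of $W$ are related by semilinear automorphisms of $V$ preserving $\Pi$, which again belong to $\mathrm{Aut}(\Gamma_k(V))$.

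The main obstacle will be the star-versus-top classification of the $f(\mathcal{C}_S)$. Cardinality alone does not in general distinguish the two types, since rank-$1$ polar residues are often small compared to Grassmann clique sizes. The resolution I anticipate is the consistency argument sketched above: two cliques $\mathcal{C}_{S_1}, \mathcal{C}_{S_2}$ meeting at a single vertex in $\Gamma(\Pi)$ must meet likewise in $\Gamma_k(V)$, and the Grassmann graph tightly constrains how stars and tops through a common vertex can intersect, ruling out top-type and mixed configurations. In the boundary case $n = 2k$, the duality of $\Gamma_k(V)$ exchanging stars and tops is itself a graph automorphism, and this duality absorbs the sole remaining ambiguity---which is precisely why the theorem is phrased ``up to graph automorphism''.
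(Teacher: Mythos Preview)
Your overall architecture matches the paper's: first locate a common $(k-m)$-dimensional subspace $W_f$ contained in every $f(U)$ (the paper's Lemma~1), quotient to obtain an isometric embedding $\bar f:\Gamma(\Pi)\to\Gamma_m(V/W_f)$, and then argue that $\bar f$ is induced by a collineation. For Lemma~1 the paper simply cites a modification of \cite[Lemma~6]{Pankov-paper1}, whereas you sketch an independent star-versus-top clique analysis. That plan is plausible but not yet a proof: dual polar lines $\mathcal C_S$ can have size~$2$ (hyperbolic type $D_m$), so the step ``$f(\mathcal C_S)$ lies in a \emph{unique} maximal clique of $\Gamma_k(V)$'' and the subsequent consistency argument between overlapping cliques both need separate treatment in that case. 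Your existence argument is also incomplete: over a finite field, ``only finitely many bad subspaces to avoid'' does not by itself guarantee a good $(k-m)$-dimensional $W$, since the supply of candidate $W$'s is finite too.

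The substantive gap is after the quotient. You assert that the image of $\bar f$ ``realises $\mathcal G(\Pi)$ as the set of maximal singular subspaces of a polar space in $\Pi_{V/W_f}$'' and then invoke \cite[Chapter~3]{Pankov-book2}. But the image being a polar space is precisely what must be proved, and the paper's introduction says explicitly that the results of \cite{Pankov-book2} are not directly available here---obtaining them is described as ``a topic for a more detailed research''. What the paper actually does, and what your outline lacks, is an inductive construction of a point-map: Lemma~1 is re-applied to each rank-$(m-1)$ residue $[P\rangle$ to produce $q(P)\in\mathcal G_1(V/W_f)$ with $\bar f([P\rangle)\subset[q(P)\rangle_m$; one checks that $q$ is injective and that collinear pairs $P,Q$ satisfy $\bar f([P+Q\rangle)\subset[q(P)+q(Q)\rangle_m$, so $q$ sends lines of $\Pi$ into lines of $\Pi_{V/W_f}$; the finite-field hypothesis is then used to upgrade ``into lines'' to ``onto lines''; and only at that point do the Fundamental Theorem of Projective Geometry and the extension theorem \cite[III.3, f)]{D-book} yield the collineation you need. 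Without this construction of $q$ there is no bridge from the purely combinatorial isometry $\bar f$ to a semilinear map, and your citation does not supply one.
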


\section{Sketch of proof}

Let $U$ be a subspace of $V$ whose dimension is less than $k$. 
Denote by $[U\rangle_{k}$ the set of all $k$-dimensional subspaces containing $U$.
If $U$ is a singular subspace for $\Pi$ then 
$$[U\rangle:={\mathcal G}(\Pi)\cap [U\rangle_{m}$$
is the set of all maximal singular subspaces containing $U$.

Let $f:{\mathcal G}(\Pi)\to {\mathcal G}_{k}(V)$ 
be an isometric embedding of $\Gamma(\Pi)$ in $\Gamma_{k}(V)$,
i.e. an injection preserving the distance between vertices.

\begin{lemma}
The image of $f$ is contained in $[U\rangle_{k}$,
where $U$ is a $(k-m)$-dimensional subspace.
\end{lemma}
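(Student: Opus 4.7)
The plan is to identify $U$ as $f(M_0) \cap f(M_1)$ for a pair of opposite maximal singular subspaces and then show this $(k-m)$-subspace lies in every $f(M)$. Pick $M_0, M_1 \in \mathcal{G}(\Pi)$ with $d(M_0, M_1) = m$ (equivalently $M_0 \cap M_1 = 0$); by the isometry of $f$ and the fact that $m$ is the diameter of $\Gamma(\Pi)$, we have $\dim(f(M_0) \cap f(M_1)) = k - m$, so $U := f(M_0) \cap f(M_1)$ has the required dimension.

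A free input is the standard Grassmann-graph geodesic fact: if $X_0, X_j \in \mathcal{G}_k(V)$ lie at distance $j$ and $X$ satisfies $d(X_0, X) + d(X, X_j) = j$, then $X_0 \cap X_j \subseteq X$; this is a one-line count inside $X$ using that $X \cap X_0$ and $X \cap X_j$ have dimensions summing to $2k - j$, so their intersection has dimension at least $k-j = \dim(X_0 \cap X_j)$, forcing equality. Transported through $f$, it immediately yields $U \subseteq f(M)$ for every $M$ lying on a geodesic from $M_0$ to $M_1$ in $\Gamma(\Pi)$.

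For the remaining $M$, my plan is an \emph{invariance sub-claim}: for every opposite pair $N_0, N_1 \in \mathcal{G}(\Pi)$, the $(k-m)$-subspace $f(N_0) \cap f(N_1)$ equals $U$. Granted this, for any $M \in \mathcal{G}(\Pi)$ one selects an opposite $M'$ (which exists in any polar space of rank $m$) and reads off $U = f(M) \cap f(M') \subseteq f(M)$. By standard connectivity of the subgraph on vertices opposite to a fixed vertex (available for dual polar graphs of rank $\ge 2$), the invariance reduces to the local form: if $M_1 \sim M_1'$ in $\Gamma(\Pi)$ are both opposite $M_0$, then $f(M_0) \cap f(M_1) = f(M_0) \cap f(M_1')$.

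\textbf{Main obstacle.} The local invariance is the technical heart, and my attack is through the maximal-clique dichotomy in $\Gamma_k(V)$: every clique of size at least three lies in a star $[A\rangle_k$ for some $(k-1)$-subspace $A$, or else entirely inside a fixed $(k+1)$-subspace (the ``top'' case). The clique $[M_1 \cap M_1'\rangle$ has size at least three by thickness of $\Pi$, so its $f$-image satisfies one of the alternatives. The star case resolves quickly: the clique always contains an element $M^*$ at distance $m - 1$ from $M_0$ (a standard polar-space fact), and combining $A \subseteq f(M^*)$ with $\dim(f(M^*) \cap f(M_0)) = k - m + 1$ forces $\dim(A \cap f(M_0)) = k - m$ by a dimension count inside $f(M^*)$; hence $f(M_0) \cap f(M_1) = A \cap f(M_0) = f(M_0) \cap f(M_1')$. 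Excluding the top case is where most of the technical work sits: one must use the distance $m$ constraints imposed on the clique image by $f(M_0)$ to derive a contradiction in that alternative, and I do not see a one-line route for this exclusion.
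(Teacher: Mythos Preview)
The paper does not give a self-contained argument here; it simply says the result is a minor modification of \cite[Lemma~6]{Pankov-paper1}. So there is no detailed proof in the present paper to compare your outline against, and the substance of the comparison would have to be with that external reference.

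That said, your outline has a genuine gap, and it is exactly the one you flag. In the ``top'' alternative for the clique $[L\rangle$ (with $L=M_1\cap M_1'$), set $T$ for the $(k{+}1)$-space and $S:=f(M_0)\cap T$. Your own dimension count gives $\dim S=k-m+1$: from $f(M^*)\subset T$ and $d(M_0,M^*)=m-1$ you get $\dim S\ge k-m+1$, and from $f(M_1)$ being a hyperplane of $T$ with $\dim(f(M_0)\cap f(M_1))=k-m$ you get $\dim S\le k-m+1$. Then $f(M_0)\cap f(M_1)=S\cap f(M_1)$ and $f(M_0)\cap f(M_1')=S\cap f(M_1')$ are two hyperplanes of $S$, and nothing in the distance constraints imposed by the single vertex $M_0$ forces these hyperplanes to coincide. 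Concretely, the number of hyperplanes of $T$ not containing $S$ is $(q^{k+1}-q^{m})/(q-1)$ and they fall into $(q^{k-m+1}-1)/(q-1)$ classes according to their intersection with $S$; there is ample room for $f(M_1)$ and $f(M_1')$ to lie in different classes. So the proposed route ``derive a contradiction in the top alternative from the distance-$m$ constraints to $f(M_0)$'' does not go through as stated: those constraints are compatible with the top alternative and with $S\cap f(M_1)\ne S\cap f(M_1')$.

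What is actually needed is a separate ``stars-not-tops'' step showing that \emph{every} line of $\Gamma(\Pi)$ is sent by $f$ into a star of $\Gamma_k(V)$. This is the kind of statement handled in \cite{Pankov-paper1}, and it typically requires looking at how two intersecting lines through a common vertex embed (so that a top on one forces incompatible behaviour on the other), or an inductive argument through residues $[P\rangle$, rather than metric information from a single opposite vertex. Once that step is in place, your star-case computation (the line $A\cap f(M_0)=f(M_0)\cap f(M_1)=f(M_0)\cap f(M_1')$) and your reduction via connectivity of the opposite-graph are fine and finish the proof.
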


\begin{proof}
A simple modification of the proof \cite[Lemma 6]{Pankov-paper1}.
\end{proof}

By Lemma 1, our isometric embedding 
can be considered as an isometric embedding $g$
of $\Gamma(\Pi)$ in $\Gamma_{m}(W)$, where $W=V/U$.

Let $P$ be a $1$-dimensional subspace of $V$
which is a point of the polar space $\Pi$.
All lines of $\Pi$ containing $P$ form a polar space of rank $m-1$
and the associated dual polar graph is 
the restriction of $\Gamma(\Pi)$ to the set $[P\rangle$.
Lemma 1 implies the existence of a $1$-dimensional subspace $q(P)\subset W$
such that 
$$g([P\rangle)\subset [q(P)\rangle_{m}.$$
In other words, our isometric embedding induces a certain mapping 
$q:{\mathcal P}\to {\mathcal G}_{1}(W)$.
The mapping $q$ is injective (this follows from the fact that $g$ is an isometric embedding).
Next, we establish the following: if $P$ and $Q$ are collinear points of $\Pi$
then
$$g([P+Q\rangle)\subset [q(P)+q(Q)\rangle_{m}$$ 
which implies that $q$ sends lines of $\Pi$ to subsets in lines of $\Pi_{W}$
(our objects are not necessarily over the same finite field).

However, if $V$ is over a finite field
then $q$ sends every line of $\Pi$ to a line of $\Pi_{W}$. 
The restriction of $q$ to every maximal singular subspace of $\Pi$
is a collineation to a certain projective space.
By the Fundamental Theorem of Projective Geometry,
this restriction is induced by a semilinear isomorphism 
between the corresponding vector spaces. 
Using \cite[Section III.3, f)]{D-book}, 
we extend $q$ to a collineation of $\Pi_{V'}$ to $\Pi_{W'}$,
where $V'$ is the minimal subspace of $V$ containing $\Pi$
and $W'$ is a subspace of $W$
\footnote{This arguments do not work if Grassmann graphs and dual polar graphs are 
related to different not necessarily finite division rings}.
This gives the claim.


\begin{thebibliography}{999}
\bibitem{BCN-book} 
Brouwer A.E., Cohen, A.M., Neumaier, A. 
{\it Distance-regular graphs},
Ergebnisse der Mathematik und ihrer Grenzgebiete/Results in Mathematics and Related Areas 
18, Springer 1989.

\bibitem{BC-book} 
Buekenhout F., Cohen A. M.,
{\it Diagram geometry}, 
Ergebnisse der Mathematik und ihrer Grenzgebiete/
Results in Mathematics and Related Areas 57,
Springer 2013.

\bibitem{D-book} 
Dieudonn\'e J., 
{\it La g\'eom\'etrie des groupes classiques},
Springer, 1971.

\bibitem{KMP}
Krotov D.S., Mogilnykh I.Yu.,  Potapov V.N.,
{\it To the theory of $q$-ary Steiner and other-type trades},
arXiv 1412.3792.


\bibitem{KP}
Kwiatkowski M., Pankov M.,
{\it Isometric embeddings of polar Grassmannians and 
metric characterizations of their apartments}, arXiv:1502.02873.

\bibitem{Pankov-book1} %
Pankov M., {\it Grassmannians of classical buildings},
Algebra and Discrete Math. Series 2, World Scientific, Singapore 2010.

\bibitem{Pankov-paper1}
Pankov M., {\it Metric characterization of apartments in dual polar spaces},
J. Combin. Theory Ser. A 118(2011), 1313--1321.

\bibitem{Pankov-book2} Pankov M.,
{\it Geometry of semilinear embeddings: Relations to graphs and codes},
World Scientific 2015.

\bibitem{Pankov-paper2}
Pankov M., {\it On embeddings of Grassmann graphs in polar Grassmann graphs},
Linear Algebra and its Applications 488 (2016), 184--198.


\bibitem{Pasini-book} %
Pasini A. {\it Diagram Geometries}, Oxford Science Publications, 
Clarendon Press 1994. 



\bibitem{Shult-book}
Shult E. E. {\it Points and Lines. Characterizing the Classical Geometries},
Universitext XXII, Springer, Berlin 2011.

\bibitem{Ueberberg} %
Ueberberg J., {\it Foundations of incidence geometry. Projective and polar spaces},
Springer Monographs in Mathematics,
Springer, Heidelberg-Berlin 2011.


\bibitem{Tits}
Tits J.,
{\it Buildings of spherical type and finite BN-pairs},
Lecture Notes in Mathematics 386,  Springer 1974.
\end{thebibliography}
\end{document}